\documentclass{amsart}
\usepackage{amscd, amsfonts, amsmath, amsthm, amssymb,epsf}

\usepackage{tabularx}
\usepackage{ltablex}
\usepackage{longtable}
\usepackage{multirow}
\usepackage{hhline}
\usepackage{fancyhdr}

\usepackage{stmaryrd}
\usepackage[all]{xy}
\usepackage{srcltx}

\hoffset -2.1mm

\newcommand{\Fq}{\mbox{${\mathbb F}_{p}$}}
\newcommand{\cal}{\mathcal}
\newcommand{\Nr}{{\cal N}}
\newcommand{\qp}{Q_{\wp}}

\newtheorem{theorem}{Theorem}

\newcommand{\Z}{\mathbb{Z}}
\newcommand{\F}{\mathbb{F}}
\newcommand{\Q}{\mathbb{Q}}

\DeclareMathOperator{\Norm}{Norm}

\begin{document}

\author{Dinesh S. Thakur}
%\thanks{Supported in part by NSA grant H98230-13-1-0244 and H98230-14-1-0162}

\title[Fermat-Wilson supercongruences]{Fermat-Wilson Supercongruences,   arithmetic derivatives and strange factorizations}

\address{ Department of Mathematics, University of Rochester, 
Rochester, NY 14627, dinesh.thakur@rochester.edu
}

\date{\today}

\begin{abstract}
In \cite{fw}, we looked at two (`multiplicative' and  `Carlitz-Drinfeld additive') analogs each, for  the well-known basic congruences of Fermat and Wilson,  
in the case of polynomials over finite fields. 
When we look at them modulo 
higher powers of primes, i.e. at `supercongruences', we  find interesting relations 
linking them together,
as well as  linking them with  arithmetic derivatives and zeta values. 
In the current work, we expand on the first analog and connections with arithmetic derivatives more systematically, giving many more equivalent conditions linking the two, 
now using `mixed derivatives' also. We also observe and prove  remarkable prime factorizations involving derivative conditions for some fundamental quantities of the function field arithmetic.
\end{abstract}

%\dedicatory{Dedicated to Greg W. Anderson}

\maketitle

\section{Introduction}

In the well-known number field-function field analogy, the cyclic group /multiplicative group of finite fields nature of  $(\Z/p\Z)^*$ and $(\F_q[t]/\wp\F_q[t])^*$ , which are (where $p$ is a prime in $\Z$ and $\wp$ is an irreducible polynomial in $\F_q[t]$)
gives parallel statements and proofs for  Fermat's little theorem and the Wilson theorem. But the different group theoretic nature of $(\Z/p^k\Z)^*$ (cyclic for odd prime $p$) from  $(\F_q[t]/\wp^k \F_q[t])^*$, for higher $k$ is partly responsible for the fact that the question of infinitude of Wieferich or Wilson primes is mysterious and still open for integers, while 
at least a naive (multiplicative) analog has a nice complete answer \cite{Tw, fw} for the polynomial case. Somewhat surprisingly, it involves derivatives very intimately. 

In this paper, we explore this further and prove more results by providing equivalence between supercongruences and  vanishing of various arithmetic higher (pure (Theorem 1 and 3) or mixed (Theorem 4)) derivatives. 

Next we show (Theorem 7) that while the fundamental `numbers'  $D_d, L_d$ of the function field arithmetic (see the start of Section 7 for definitions, factorizations) have beautiful regular symmetric prime  factorizations exactly involving all the primes  of degree $\leq d$, with multiplicities simply depending on just their degrees; simple perturbations   $D_{d-1}-c$ or $L_{d-1}-c$ ($c\in \F_q^*$) have `derivative constant' characterizations of the  degree $d$ primes occurring in their factorizations, leading to  very few `special Wilson primes' as factors, and prime factors of larger degrees being mysterious.  We also note that $D_d$ and $L_d$ occur as (reciprocal) coefficients in Carlitz-Drinfeld exponential and logarithm series for $\F_q[t]$ respectively, and that $D_d$ is the Carlitz factorial of $q^d$. For more,  we refer to \cite[Sec. 2.5, 4.13]{Thakur}.

Interestingly, the Wilson primes (`double derivative 0' condition) are exactly the primes involved in a strange hybrid version (Theorem 5) of the famous Wolstenholme theorem that $p^2$ divides $1+1/2+\cdots +1/(p-1)$, for prime $p>3$.  See 7.4 for some numerical examples of Theorems 7 and 5 for more on these strangely beautiful factorizations.

\section{Basic definitions and Fermat-Wilson analogs} 

Let $A=\F_q[t]$, where $\F_q$ is a finite field of $q$ elements, where $q$ is a power of a prime $p$.  Let $\wp$ denote a monic prime of $A$ of degree $d$ (in $t$), so that its residue field 
$\F_{\wp}\subset A_{\wp}$ has cardinality $\Norm \wp =q^d$. Let $\theta\in \F_{\wp}$ be the Teichm\"uller representative 
of $t$ modulo $\wp$. Note that $\wp=\prod (t-\theta^{q^i})$ is the minimal polynomial in $A$ for $\theta$.
`
We have the well-known Fermat theorem analog: $a^{\Norm \wp}\equiv a\mod \wp$, for $a\in A$.

\subsection{Definition} Let $a\in A$. We say that $\wp$ is a Wieferich prime base $a$ (or $a$-Wieferich), if $a^{\Norm \wp}\equiv a \mod \wp^2$.

Often, but not here,  one excludes in the definition the trivial cases $a=0, 1, -1$ classically,  and $a\in \F_q$ in the function field case, as these are exactly the cases 
where $a^p=a$ and $a^{\Norm \wp} =a$ respectively. 

We have the well-known Wilson theorem analog:  $F_d\equiv -1\mod \wp$, where $F_d$ is the product of all non-zero polynomials of degree less than $d$
(which represent `smallest' representatives of all non-zero residue classes modulo $\wp$). (For more analogies, using the Carlitz factorial,  we refer to \cite{fw}.)

\subsection{Definition} We say that $\wp$ is a Wilson prime, if $F_d\equiv -1\mod \wp^2$.

\section{Three arithmetic derivatives}

We now give definitions and some basic comments on the three arithmetic derivatives, in fact, a derivative, a Frobenius-difference quotient and a difference quotient. Let $a\in A\F_{\wp}=\F_{\wp}[t]$. 

 \subsection{The usual derivative} Let $a^{(1)}:=D(a): =da/dt$ and  denote by $a^{(i)}:=D^i(a):=d^ia/dt^i$.

 \subsection{Fermat quotient (i.e., Frobenius-difference quotient) derivative} Let $\qp(a):= (a^{\Norm \wp} - a)/\wp$ and  denote its $i$-th iteration by $\qp^{i}$.

 \subsection{Teichm\"uller difference quotient derivative}   Define $a^{[i]}=\Delta^i(a)$  by $a^{[0]}(t)=a(t)$ and $a^{[i+1]}(t)=(a^{[i]}(t)-a^{[i]}(\theta))/(t-\theta)$.

\subsection{Remarks}  (I) All these derivatives give self-maps on $A\F_{\wp}$, and the first two restrict to self-maps on $A$ also. They are all $\F_{\wp}$-linear. 
They depend on the choice $t$ of the generator of $A$ only through its sign. 

(II) They all evaluate to zero on constants $a\in \F_{\wp}$. Evaluated on $p$-th powers, the first one vanishes, the second one vanishes modulo $\wp$, and the 
second and third one vanish when evaluated at $t=\theta$. 

(III) Let us denote the degree in $t$ by $\deg$. When $\deg(a)>0$, (i) $\deg (da/dt)\leq \deg (a) -1$, with strict inequality, exactly when $p$ divides $\deg(a)$, (ii) $\deg(\qp(a))
=q^d\deg(a)-d, (iii) \deg a^{[1]}=\deg(a)-1$. 

(IV) For $f=\sum f_it^i\in F_{\wp}[t]$, we have (i) $da/dt=\sum if_it^{i-1}$,   (ii) $\qp(a)=\sum f_i (t^{iq^d}-t^i)/\wp$, (iii) $a^{[1]}=\sum f_i\sum_{j=1}^i t^{i-j}\theta^{j-1}$. 

(V) Part (iii) of (IV) implies that $a^{[1]}|_{t=\theta}=da/dt|_{t=\theta}$ (formally, without using definition of $\theta$) and similarly for higher derivatives. This also follows from the 
fact that these higher differences are polynomials which are continuous, so the derivative-limit is the evaluation.

%$a=a^{(0)}=D^0(a)=a^{[1]}=\Delta^0=\qp^{(0)}$. 

(VI) We will not give corresponding (twisted) derivatives properties for each, as we do not need them. But see e.g., \cite{buium} for analogous set-ups in characteristic zero.

\section{Fermat supercongruence and the first derivative of the base}

By the definition, the condition that `$\wp$ is $a$-Wieferich' is equivalent to `$Q_{\wp}(a)\equiv 0\mod \wp$'. The condition being equivalent to higher multiplicity of the root $\theta$, by the usual 
detection of such multiplicities by derivatives in the polynomial case, we get  more transparent (e.g, (i) below) equivalent vanishing derivative conditions, as follows.

\begin{theorem} The following conditions are equivalent. 

(0) Prime $\wp$ is $a$-Wieferich, 

(i) $da/dt \equiv 0 \mod \wp$, (i') $(da/dt)|_{t=\theta}=0$,

(ii) $Q_{\wp}(a)\equiv 0\mod \wp$, (ii') $Q_{\wp}(a)|_{t=\theta}=0$, 

(iii) $a^{[1]}|_{t=\theta}=0$. 

\end{theorem}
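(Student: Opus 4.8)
The plan is to establish a cycle of implications among the conditions $(0)$, $(i)$, $(i')$, $(ii)$, $(ii')$, $(iii)$, exploiting the fact that they group naturally into ``modulo $\wp$'' statements and ``evaluation at $\theta$'' statements, which are connected by the observation that reduction modulo $\wp$ of a polynomial in $A$ is governed by the value at $\theta$ (since $\wp$ is the minimal polynomial of $\theta$). First I would record the trivial equivalence $(0)\Leftrightarrow(ii)$: by the definition of the Fermat quotient $Q_{\wp}(a)=(a^{\Norm\wp}-a)/\wp$, the congruence $a^{\Norm\wp}\equiv a\bmod\wp^2$ is literally the same as $Q_{\wp}(a)\equiv0\bmod\wp$, so this costs nothing.

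Next I would link the ``mod $\wp$'' and ``evaluate at $\theta$'' versions of each quantity. The key principle is that for $f\in A=\F_q[t]$ (more generally $f\in\F_{\wp}[t]$), we have $f\equiv0\bmod\wp$ if and only if $\theta$ is a root of $f$, i.e.\ $f|_{t=\theta}=0$; this is immediate because $\wp=\prod(t-\theta^{q^i})$ is the minimal polynomial of $\theta$ over $\F_q$, so any multiple of $\wp$ vanishes at $\theta$, and conversely a polynomial over $\F_q$ vanishing at $\theta$ is divisible by its minimal polynomial $\wp$. Applying this with $f=da/dt$ gives $(i)\Leftrightarrow(i')$, and with $f=Q_{\wp}(a)$ gives $(ii)\Leftrightarrow(ii')$. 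This step requires a little care because $da/dt$ and $Q_{\wp}(a)$ have coefficients in $\F_q$ (being derivatives of $a\in A$), so the clean minimal-polynomial argument applies directly.

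The heart of the matter is connecting the derivative $da/dt$ to the Fermat quotient, which I would do through the multiplicity-of-root viewpoint flagged in the paragraph before the theorem. Writing $a^{\Norm\wp}-a=\wp\cdot Q_{\wp}(a)$, the condition $(ii')$ that $Q_{\wp}(a)$ vanishes at $\theta$ says exactly that $\theta$ is (at least) a double root of the polynomial $a^{\Norm\wp}-a\in\F_{\wp}[t]$. I would then differentiate: $\frac{d}{dt}(a^{\Norm\wp}-a)=-da/dt$ in characteristic $p$, since $a^{\Norm\wp}=a^{q^d}$ is a $p$-th power and hence has vanishing derivative. Thus $\theta$ is a double root of $a^{\Norm\wp}-a$ precisely when $\theta$ is a root of its derivative $-da/dt$, i.e.\ when $(da/dt)|_{t=\theta}=0$, giving $(i')\Leftrightarrow(ii')$. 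The one subtlety to check is that $\theta$ is already a simple root of $a^{\Norm\wp}-a$ (which holds because $a^{\Norm\wp}\equiv a\bmod\wp$ is the Fermat analog, valid for every $a$), so that the standard derivative test for higher multiplicity is legitimately triggered.

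Finally I would dispose of $(iii)$ using Remark $(V)$ of Section~3, which states that $a^{[1]}|_{t=\theta}=(da/dt)|_{t=\theta}$ formally; this yields $(i')\Leftrightarrow(iii)$ immediately. Assembling the chain $(0)\Leftrightarrow(ii)\Leftrightarrow(ii')\Leftrightarrow(i')\Leftrightarrow(i)$ together with $(i')\Leftrightarrow(iii)$ closes the loop and proves all conditions equivalent. I expect the main obstacle to be the rigorous justification of the derivative-multiplicity step $(i')\Leftrightarrow(ii')$: one must argue carefully in characteristic $p$ that the vanishing of $\frac{d}{dt}(a^{\Norm\wp}-a)$ at $\theta$ is equivalent to $\theta$ being a double root, which relies on $\theta$ being a simple root to begin with and on the clean identity $\frac{d}{dt}(a^{q^d})=0$.
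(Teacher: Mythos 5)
Your proposal is correct and follows essentially the route the paper itself takes: $(0)\Leftrightarrow(\mathrm{ii})$ by definition, the primed/unprimed pairs via $\wp$ being the minimal polynomial of $\theta$ over $\F_q$, $(\mathrm{i}')\Leftrightarrow(\mathrm{iii})$ by Remark 3.4(V), and the derivative condition via detection of the multiplicity of the root $\theta$ of $a^{\Norm\wp}-a$ --- exactly the viewpoint flagged in the paragraph preceding the theorem. The only cosmetic difference is that for $(\mathrm{i})\Leftrightarrow(\mathrm{ii})$ the paper cites the explicit congruence $a^{q^d}-a\equiv (da/dt)[d]\bmod\wp^2$ of Remarks 4.1, whereas you derive the same equivalence from $\tfrac{d}{dt}(a^{q^d}-a)=-da/dt$ together with the standard double-root test, which is equally valid (using that $\theta$ is a simple root of the separable polynomial $\wp$).
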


\begin{proof}
The equivalence of (0) and (ii) follows from definitions.  The equivalence with (i) was also noted e.g., in \cite[p.195]{fw}. The equivalence of (ii) with (ii'), and of (i) with (i') follow,  since $\wp$ is minimal polynomial over $A$ of $\theta$. The equivalence (i') with (iii) was noted in Remarks 3.4 (V). 
\end{proof}

As an immediate corollary, we get 

\begin{theorem}
(i) If there are infinitely many $a$-Wieferich primes, then $a=b^p$ for some $b\in A$, and then all the primes of $A$ are $a$-Wieferich. 

(ii) There are no $a$-Wieferich primes, if and only if $a=b^p+ct$, with $b\in A$ and $c\in \F_q^*$. 

\end{theorem}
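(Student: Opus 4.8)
The plan is to deduce Theorem 2 directly from the derivative characterization established in Theorem 1, exploiting the special structure of the usual derivative $D = d/dt$ on $\F_q[t]$. By Theorem 1, $\wp$ is $a$-Wieferich if and only if $da/dt \equiv 0 \bmod \wp$. So the whole question reduces to understanding, for a fixed $a \in A$, the set of primes $\wp$ dividing the single fixed polynomial $da/dt \in A$.

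For part (i), I would argue as follows. If there are infinitely many $a$-Wieferich primes, then $\wp \mid da/dt$ for infinitely many distinct monic primes $\wp$. A nonzero polynomial has only finitely many prime divisors, so the only way infinitely many primes can divide $da/dt$ is if $da/dt = 0$ identically. The key fact is then the standard description of the kernel of $d/dt$ in characteristic $p$: writing $a = \sum a_i t^i$, we have $da/dt = \sum i a_i t^{i-1} = 0$ exactly when $a_i = 0$ for every $i$ not divisible by $p$, i.e. when $a$ is a polynomial in $t^p$. Since $\F_q$ is perfect (Frobenius is surjective on $\F_q$), every coefficient is a $p$-th power, and one checks $a = \sum a_{pj} t^{pj} = \bigl(\sum a_{pj}^{1/p} t^{j}\bigr)^p = b^p$ for some $b \in A$. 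Once $a = b^p$, then $da/dt = 0 \equiv 0 \bmod \wp$ for \emph{every} prime $\wp$, so every prime is $a$-Wieferich by Theorem 1, giving the second assertion of (i).

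For part (ii), the condition that there are \emph{no} $a$-Wieferich primes means, again by Theorem 1, that no monic prime $\wp$ divides $da/dt$; equivalently $da/dt$ is a nonzero constant in $\F_q^*$. (It cannot be $0$, since then every prime would divide it and be Wieferich; and any nonconstant $da/dt$ would be divisible by some monic irreducible.) So I must show $da/dt \in \F_q^*$ if and only if $a = b^p + ct$ with $b \in A$, $c \in \F_q^*$. For the forward direction, write $da/dt = c \in \F_q^*$ and observe that $d(ct)/dt = c$, so $d(a - ct)/dt = 0$; by the kernel computation from part (i), $a - ct = b^p$ for some $b \in A$, giving $a = b^p + ct$. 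The converse is the same computation run backwards: $d(b^p + ct)/dt = 0 + c = c \in \F_q^*$, which is nonzero and constant, hence divisible by no prime.

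The only genuinely substantive point, and the one I would state carefully, is the characterization of $\ker(d/dt)$ as the $p$-th powers $b^p$; this is where perfectness of $\F_q$ enters (to extract $p$-th roots of the coefficients), and everything else is bookkeeping translating "infinitely many / no prime divisors" into "the fixed polynomial $da/dt$ is zero / a nonzero constant." I expect no serious obstacle: the entire argument is a short application of Theorem 1 combined with elementary facts about polynomial derivatives in characteristic $p$.
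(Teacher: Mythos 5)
Your proposal is correct and follows exactly the route the paper intends: the paper derives this theorem as an ``immediate corollary'' of Theorem 1's equivalence between $\wp$ being $a$-Wieferich and $\wp \mid da/dt$, and your fleshing-out (finitely many prime divisors of a nonzero polynomial, the kernel of $d/dt$ being the $p$-th powers via perfectness of $\F_q$, and the reduction $a - ct \in \ker(d/dt)$ for part (ii)) is precisely the argument being left to the reader.
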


\subsection{Remarks} We can also see this \cite[p. 195]{fw} from the following. 
 If $a=\sum a_it^i$,
  we have, modulo $\wp^2$, (without loss of generality $\wp\neq t$) that
$$a^{q^d}-a = \sum a_it^i((t^{q^d-1}-1+1)^i-1)\equiv \sum a_it^i
{i\choose 1}([d]/t)^1 = (da/dt)[d],$$

This suggests that while we do not have distinguished `$t$' in the rational numbers case to compare $da/dt$, given $p$, a mod $p$ analog of $db/da$ may be the ratio of 
Fermat quotients $(b^p-b)/(a^p-a)$. 

If we take $a=\wp$ and divide the displayed congruence  by $\wp$ we see that 
$$d\wp/dt\equiv -1/Q_{\wp}(t) \mod \wp.$$ Since $d\wp/dt$ has degree less than that of $\wp$, 
this allows us to extract $d\wp/dt$ from $Q_{\wp}(t)$ modulo $\wp$.  In contrast to $d\wp/dt$ and $\wp^{[1]}$, the Fermat quotient $Q_{\wp}(t)$, which  occurs analogously in our main theorems, feels  more like the derivative of  $t$ rather than of $\wp$ with respect to $t$. This explains the reciprocal relation.

\section{Wilson supercongruence and the second derivatives of the prime}

We restrict to $p>2$, for simplicity, leaving the $p=2$ discussion to \cite{fw}. 

.

\begin{theorem} Let $p>2$. The following are equivalent. 

(0) Prime $\wp$ is a Wilson prime, 

(i) $d^2\wp/dt^2=0$, (i') $d^2\wp/dt^2\equiv 0 \mod \wp$, (i'') $(d^2\wp/dt^2)|_{t=\theta} =0$,

(ii) $\qp^2(t) \equiv 0 \mod \wp$, (ii') $\qp^2(t)|_{t=\theta}=0$, 

(iii) $\wp^{[2]}|_{t=\theta}=0$. 

\end{theorem}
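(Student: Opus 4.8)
The plan is to show that every listed condition is equivalent to the single most concrete one, (i''): $(d^2\wp/dt^2)|_{t=\theta}=0$. Write $\wp'=d\wp/dt$ and $\wp''=d^2\wp/dt^2$, and note throughout that $\wp$ is separable, so $\wp'(\theta)\neq 0$, and that $2$ is a unit since $p>2$. The ``pure derivative'' cluster is immediate: by Remark 3.4(III)(i) applied twice, $\deg\wp''\le d-2<\deg\wp$, so $\wp\mid\wp''$ forces $\wp''=0$, giving (i')$\Leftrightarrow$(i); and since $\wp$ is the minimal polynomial of $\theta$, a polynomial in $A$ is divisible by $\wp$ iff it vanishes at $\theta$, giving (i')$\Leftrightarrow$(i''). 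For the Teichm\"uller cluster, the confluent divided difference extending Remark 3.4(V) gives $\wp^{[2]}|_{t=\theta}=\tfrac{1}{2}\wp''(\theta)$, so (iii)$\Leftrightarrow$(i'') because $2$ is invertible. Finally $\qp^2(t)\in A$ (iterated Fermat quotients stay in $A$, since the Fermat analog gives $\wp\mid x^{q^d}-x$ for every $x\in A$), so the same minimal-polynomial argument gives (ii)$\Leftrightarrow$(ii').

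Next I would connect the Fermat-quotient condition to (i'') by a short computation. Dividing by $\wp$ the mod-$\wp^2$ congruence displayed in the Remarks of Section 4 gives, for any $a\in A$, the product rule $\qp(a)\equiv(da/dt)\,\qp(t)\pmod{\wp}$. Set $b=\qp(t)$ and $b'=db/dt$; applying the rule with $a=b$ yields $\qp^2(t)=\qp(b)\equiv b'\,\qp(t)=b'b\pmod{\wp}$. Differentiating $b\,\wp=t^{q^d}-t$ once gives $b'\wp+b\wp'=-1$, so $b\equiv-1/\wp'\pmod{\wp}$; a second differentiation gives $b''\wp+2b'\wp'+b\wp''=0$, so $b'\equiv-b\wp''/(2\wp')\pmod{\wp}$. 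Combining,
\[
\qp^2(t)\equiv b'b\equiv-\frac{\wp''}{2(\wp')^3}\pmod{\wp},
\]
which vanishes mod $\wp$ exactly when $\wp''\equiv0\pmod{\wp}$. This proves (ii)$\Leftrightarrow$(i'').

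Third, and this is the main obstacle, I must link the multiplicative Wilson condition (0) to the derivative condition (i''). Since $F_d\equiv-1\pmod{\wp}$, write $F_d=-1+\wp G$ with $G\in A$; by definition $\wp$ is a Wilson prime iff $\wp\mid G$, i.e. iff $G(\theta)=0$. Differentiating and evaluating at $\theta$ gives $F_d'(\theta)=\wp'(\theta)G(\theta)$, so, as $\wp'(\theta)\neq0$, (0) is equivalent to $F_d'(\theta)=0$. I would evaluate $F_d'(\theta)$ by logarithmic differentiation of $F_d=\prod a$ over the nonzero $a$ with $\deg a<d$ (whose values $a(\theta)$ run once over $\F_\wp^{*}$): since $F_d(\theta)=-1$,
\[
F_d'(\theta)=-\sum_{a}\frac{a'(\theta)}{a(\theta)}.
\]
Expanding $a=\sum_{i<d}c_i t^i$ in the power basis gives $a'(\theta)=\sum_i ic_i\theta^{i-1}$ and $c_i=\mathrm{Tr}_{\F_\wp/\F_q}(\delta_i\,a(\theta))$, where $\{\delta_i\}$ is the trace-dual basis of $\{\theta^i\}$; substituting and interchanging sums reduces everything to the power sums $\sum_{\beta\in\F_\wp^{*}}\beta^{q^j-1}$, which equal $-1$ for $j=0$ and vanish for $1\le j\le d-1$. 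Only the $j=0$ terms survive, leaving $\sum_a a'(\theta)/a(\theta)=-\sum_i i\theta^{i-1}\delta_i$, so $F_d'(\theta)=\sum_i i\theta^{i-1}\delta_i$. Euler's dual-basis formula $\sum_i\delta_i X^i=\wp(X)/\big(\wp'(\theta)(X-\theta)\big)$, differentiated at $X=\theta$ with $\wp''(\theta)=2H'(\theta)$ for $H(X)=\wp(X)/(X-\theta)$, then gives $F_d'(\theta)=\wp''(\theta)/(2\wp'(\theta))$, which vanishes iff $\wp''(\theta)=0$. This proves (0)$\Leftrightarrow$(i''), recovering the known characterization from \cite{fw}, and closes the chain.

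I expect the third step to be the genuine difficulty: the first two clusters are bookkeeping with derivatives and the minimal polynomial, but extracting $\wp''$ from the purely multiplicative quantity $F_d$ requires the logarithmic-derivative trick together with two classical inputs, the vanishing of the finite-field power sums and Euler's formula for the trace-dual of a power basis. A secondary point to watch is that all divisions by $2$ and by $\wp'(\theta)$ are legitimate precisely because $p>2$ and $\wp$ is separable, which is exactly where the hypothesis $p>2$ enters.
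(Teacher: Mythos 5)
Your proof is correct, but it is a genuinely different argument from the paper's: the paper disposes of the hard equivalences $(0)\Leftrightarrow(\mathrm{i})\Leftrightarrow(\mathrm{ii})\Leftrightarrow(\mathrm{iii})$ by citing the main theorems of the earlier paper \cite{Tw}, and only adds the routine minimal-polynomial reductions for the primed variants. You instead give a self-contained chain anchored at $(\mathrm{i''})$. Your reductions within each cluster (degree bound $\deg\wp''\le d-2$, minimal polynomial of $\theta$, the confluent divided difference $\wp^{[2]}(\theta)=\wp''(\theta)/2$) check out, as does the chain-rule computation $Q_{\wp}^2(t)\equiv -\wp''/(2(\wp')^3)\bmod\wp$ obtained by differentiating $Q_{\wp}(t)\,\wp=t^{q^d}-t$ twice (with the tacit normalization $\wp\neq t$, harmless by translation invariance and the trivial $d=1$ case). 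The genuinely new content is your third step: computing $F_d'(\theta)=\wp''(\theta)/(2\wp'(\theta))$ by logarithmic differentiation over the reduced residues, the vanishing of the power sums $\sum_{\beta\in\F_{\wp}^*}\beta^{q^j-1}$ for $1\le j\le d-1$, and Euler's formula $\sum\delta_iX^i=\wp(X)/(\wp'(\theta)(X-\theta))$ for the trace-dual basis; I verified this identity (e.g.\ it gives $0$ for Artin--Schreier primes, consistent with Section 6). The route underlying \cite{Tw} and the paper's own Theorem 5 is different in flavor: it passes through the closed form $F_d=(-1)^dD_d/L_d$ and the explicit prime factorizations of $D_d$ and $L_d$, which buys the stronger quantitative refinements (the mod $\wp^{p-1}$ supercongruence, multiplicities in $L_{d-1}'$) that your pointwise evaluation at $\theta$ does not immediately yield. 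What your approach buys instead is independence from those factorization identities and an exact first-order formula for $F_d'(\theta)$ in terms of $\wp$ alone, which is arguably more transparent as an explanation of why the second derivative governs the Wilson condition. Your closing remark about where $p>2$ and separability enter is accurate.
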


\begin{proof} The equivalence of (0), (i), (ii) and (iii) was proved in the main theorems \cite[Thm. 2.5, Thm. 2.9]{Tw}. The equivalence of (i) with (i') and (ii) with 
(ii') follows as before. 
\end{proof}

\subsection{Remarks}: As a corollary, we got a  simple characterization of Wilson primes $\wp=\sum p_it^i$ as irreducible polynomials with  (from (i)) non-zero $p_i$ occurring only when $p$ divides $i$ or $i-1$. We 
deduced  \cite[Thm. 2.10]{Tw} their  infinitude for any given $A$. It was also proved \cite[Thm. 2.9]{Tw} that if  the Wilson congruence holds modulo $\wp^2$, it automatically holds modulo $\wp^{p-1}$.

To these three `pure' double derivatives conditions, we now add six more `mixed'  double derivatives equivalent conditions. 

\begin{theorem}
The following are equivalent to the conditions of the previous theorems. 

(i-ii) $dQ_{\wp}(t)/dt \equiv 0 \mod \wp$, (i-ii') $(dQ_{\wp}(t)/dt)|_{t=\theta}=0$, 

(ii-i) $Q_{\wp}(d\wp/dt)\equiv 0 \mod \wp$, (ii-i') $ (Q_{\wp}(d\wp/dt))|_{t=\theta}=0$, 

(i-iii) $(d\wp^{[1]}/dt)|_{t=\theta}=0$, 

(iii-i) $(d\wp/dt)^{[1]})|_{t=\theta}=0$, 

(ii-iii) $(Q_{\wp}(\wp^{[1]}))|_{t=\theta}=0$, 

(iii-ii) $(Q_{\wp}(t)^{[1]})|_{t=\theta}=0$.

\end{theorem}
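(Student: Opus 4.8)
The plan is to show that each of the six "mixed" double-derivative conditions is equivalent to one of the three "pure" conditions already established in Theorem~5 (the equivalence of $d^2\wp/dt^2=0$, $\qp^2(t)\equiv 0 \bmod \wp$, and $\wp^{[2]}|_{t=\theta}=0$). The notation labels this transparently: a condition labeled "$X$-$Y$" applies the operator of type $X$ first and the operator of type $Y$ second (or vice versa), so the natural strategy is to take each mixed operator and reduce it, modulo $\wp$ or after evaluation at $t=\theta$, to one of the pure operators $D^2$, $\qp^2$, or $\Delta^2$. I would organize the proof into three pairs, matching the three "mixed orderings" of each pair of derivative types.

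First, for the conditions mixing the usual derivative with the Fermat quotient (i-ii) and (ii-i): I would use the congruence derived in Remarks~4.1, namely $a^{q^d}-a \equiv (da/dt)[d] \bmod \wp^2$, where $[d]:=t^{q^d}-t$. Differentiating this relation in $t$ and reducing modulo $\wp$ should connect $dQ_\wp(t)/dt$ with $d^2\wp/dt^2$; more precisely, since $\qp(t)=(t^{q^d}-t)/\wp=[d]/\wp$ and $\wp\mid[d]$, one computes $dQ_\wp(t)/dt$ using the quotient rule and the relation $d\wp/dt\equiv -1/\qp(t)\bmod\wp$ from Remarks~4.1. For (ii-i), the key is that $\qp$ vanishes modulo $\wp$ on the image of $D$ applied to $\wp$ precisely when the relevant second-order data vanishes; here I would expand $\qp(d\wp/dt)=((d\wp/dt)^{q^d}-(d\wp/dt))/\wp$ and use the Fermat congruence to identify its reduction mod $\wp$ with a multiple of $d^2\wp/dt^2$.

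Second, for the conditions mixing the usual derivative with the Teichm\"uller difference quotient (i-iii) and (iii-i): the main tool is Remark~3.4(V), which gives $a^{[1]}|_{t=\theta}=(da/dt)|_{t=\theta}$ \emph{formally}. For (i-iii), I would apply this to $a=\wp$ and then differentiate: since $\wp^{[1]}$ is an honest polynomial, $(d\wp^{[1]}/dt)|_{t=\theta}$ is computed directly, and one checks via the explicit formula in Remark~3.4(IV)(iii) that it equals (a nonzero scalar times) $(d^2\wp/dt^2)|_{t=\theta}$. For (iii-i), I apply $\Delta$ to $d\wp/dt$ and use (V) again to get $((d\wp/dt)^{[1]})|_{t=\theta}=(d^2\wp/dt^2)|_{t=\theta}$; this is the most direct reduction, since it is essentially (V) applied to the polynomial $d\wp/dt$. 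Third, for the conditions mixing the Fermat and Teichm\"uller quotients (ii-iii) and (iii-ii): I combine the fact that $\qp(a)|_{t=\theta}$ measures the same first-order vanishing as $a^{[1]}|_{t=\theta}$ (both being equivalent to $(da/dt)|_{t=\theta}=0$ by Theorem~1) with (V), reducing each composite to $(d^2\wp/dt^2)|_{t=\theta}$.

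\emph{The hard part will be} the precise bookkeeping of the nonzero $\F_{\wp}^*$-scalars and the verification that the Frobenius quotient $\qp$ interacts correctly with the other operators modulo $\wp^2$ rather than merely modulo $\wp$ — since $\qp$ itself is defined by a division by $\wp$, composing it with another derivative requires controlling one extra power of $\wp$, exactly the subtlety that makes these \emph{super}congruences rather than ordinary congruences. In particular, for the two conditions built from $\qp$ applied to a non-constant polynomial (namely (ii-i) and (ii-iii)), I expect the verification that no extraneous term survives modulo $\wp$ to be the genuine obstacle; the remaining four equivalences should follow cleanly from Remark~3.4(V) and the explicit power-series formulas in Remark~3.4(IV), essentially because evaluation at $t=\theta$ converts the formal difference quotients into honest derivative values.
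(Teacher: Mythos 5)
Your plan follows the same overall strategy as the paper's proof --- reduce each mixed condition to one of the pure conditions of Theorems 1 and 3 using Remarks 3.4(IV),(V) and the congruence of Remarks 4.1 --- and all of its steps can be completed, but several of your sub-arguments are genuinely different from the ones the paper uses. For (i-ii) the paper simply specializes the Remarks 4.1 congruence at $a=Q_{\wp}(t)$, obtaining $Q_{\wp}^{2}(t)\equiv (dQ_{\wp}(t)/dt)\,Q_{\wp}(t) \bmod \wp$ and cancelling the unit $Q_{\wp}(t)$ (here one uses that $\wp$ divides $[d]$ with multiplicity one); your route via the quotient rule instead ties (i-ii) directly to (i), and the ``hard part'' you flag dissolves once you differentiate the \emph{exact} identity $[d]=\wp\,Q_{\wp}(t)$, which gives $\wp\,dQ_{\wp}/dt=-(1+\wp'Q_{\wp})$ and, after one more differentiation, $2\wp'\,dQ_{\wp}/dt\equiv-\wp''\,Q_{\wp}\bmod\wp$ --- no mod-$\wp^{2}$ refinement of the reciprocal relation is needed. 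For (ii-i) your idea of applying the Remarks 4.1 congruence to $a=\wp'$, yielding $Q_{\wp}(\wp')\equiv \wp''\,Q_{\wp}(t)\bmod\wp$, is cleaner and more uniform than the paper's two-sided argument ($\wp'$ a $p$-th power in one direction, differentiating $(\wp')^{q^{d}}-\wp'=f\wp^{2}$ in the other). For (i-iii) your explicit computation from 3.4(IV)(iii), which gives $(d\wp^{[1]}/dt)|_{t=\theta}=\tfrac{1}{2}\,\wp''(\theta)$, replaces the paper's quotient-rule divisibility argument; note that this factor $\tfrac{1}{2}$, like the factor $2$ above, is exactly where the standing hypothesis $p>2$ enters. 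One caution: for (ii-iii) you invoke the equivalence of $Q_{\wp}(a)|_{t=\theta}=0$ with $a^{[1]}|_{t=\theta}=0$ for $a=\wp^{[1]}\in\F_{\wp}[t]$, whereas Theorem 1 is stated for $a\in A$; the extension is true --- since $x^{q^{d}}-x=\prod_{c\in\F_{q^{d}}}(x-c)$ and $a(\theta)\in\F_{q^{d}}$, the $(t-\theta)$-order of $a^{q^{d}}-a$ equals that of $a(t)-a(\theta)$ --- but it must be stated, and it is in effect what the paper's direct divisibility argument for (ii-iii) supplies.
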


\begin{proof} The equivalence of (i-ii) with the `primed' version (i-ii') and of (ii-i) with (ii-i') follows since $\wp$ is the minimal polynomial over $A$ for $\theta$. 
We now use freely the remarks 3.4 in the proof, and show the equivalences, one by one, to some previously established ones. 

The equivalence of (i-ii)  (\cite[Thm. 2.6]{fw})  follows from the first congruence in Remarks 4.1 specialized at  $a=Q_{\wp}(t)=[d]/\wp$, 
where $[d]=t^{q^d}-t$, since $\wp$ divides $[d]$ with multiplicity $1$.  

(i) implies that $d\wp/dt=a^p$ for some $a\in A$, so that its Fermat quotient is divisible by $\wp^{p-1}$, so it implies (ii-i). Conversely, (ii-i) implies that 
$(\wp')^{q^d}-\wp'=f\wp^2$, for some $f\in A$. Taking derivative with respect to $t$, we get $f'\wp^2+2f\wp\wp'=-\wp''$, so that $\wp$ divides $\wp''$, which by the degree considerations immediately implies (i). 

By the quotient rule, (i-iii) implies that $(t-\theta)^3$ divides $\wp'(t)(t-\theta)-(\wp(t)-\wp(\theta))$, so that $(t-\theta)^2$ divides $\wp''(t)(t-\theta)+\wp'(t)-\wp'(t)=\wp''(t)(t-\theta)$, which implies (i''). Conversely, since $\wp'(t)=\wp'(\theta)$ by  Remark 3.4 (V), we have 
$$\frac{d}{dt}\wp^{[1]}(t)=\frac{\wp'(t)}{t-\theta}-\frac{\wp(t)-\wp(\theta)}{(t-\theta)^2}=\frac{\wp'(t)-\wp'(\theta)}{t-\theta}-\frac{\wp^{[1]}(t)-\wp^{[1]}(\theta)}{t-\theta}.$$
Now the second quantity vanishes at $t=\theta$ by (iii),  and by (ii) the numerator of the first quantity is divisible by $(t-\theta)^p$. This implies (i-iii). 

(iii-i) implies that $(t-\theta)^2$ divides $\wp'(t)-\wp'(\theta)$, so that $t-\theta$ divides $\wp''(t)$ implying (i''). Conversely, (i) implies $\wp'$ is $p$-th power, so that 
$(\wp')^{[1]}$ is divisible by $(t-\theta)^{p-1}$ implying (iii-i). 

(ii-iii) implies $(t-\theta)\wp$, and so also $(t-\theta)^2$, divides $(\wp^{[1]})^{q^d}-\wp^{[1]}$. Taking the derivative with respect to $t$, we see that $t-\theta$ divides 
$d/dt(\wp^{[1]})$ implying (i-iii). Conversely, (iii) implies that $(t-\theta)^2$ divides $:=\wp^{[1]}(t)-\wp^{[1]}(\theta)$, so it divides $x^{q^d}-x=(\wp^{[1]}(t))^{q^d}-\wp^{[1]}(t)$ implying (ii-iii). 

(iii-ii) is equivalent to the divisibility of $Q_{\wp}(t)-Q_{\wp}(t)|_{t=\theta}$, which is equivalent to the divisibility if $d/dt(Q_{\wp}(t))$ by $t-\theta$, which is (i-ii'). 
\end{proof} 

\subsection{Remarks} The condition (ii-i) can be restated in a more striking form saying  that $\wp$ is Wilson if and only if $\wp$  is base $d\wp/dt$-Wieferich.

%\begin{theorem}

%\end{theorem}

\section{Examples} 

Let us  verify, by direct calculations, the nine equivalent conditions of the two main theorems for the family of Artin-Schreier primes $\wp= t^p-t-m$ for $A=\F_p[t]$, m$\in \F_p^*$, and $p>2$. 
That these primes are Wilson primes was  noted and proved already in \cite{binom}[Thm. 7.1]. 

Since $D\wp=-1, D^2\wp=0$, we get (i). 

Since $\wp^{[1]} =(t-\theta)^{p-1}-1$, $\wp^{[2]}=(t-\theta)^{p-2}$, we get (iii). 

The calculation  $\qp(t)=(t^{p^p}-t)/(t^p-t-m)= \wp^{p^{p-1}}+\wp^{p^{p-2}}+\cdots + \wp^{p-1}+1$ shows that $\wp^{p-2}$ divides $Q_{\wp}^2(t)$ verifying (ii). 

This calculation also implies (i-ii) immediately. 

We have $Q_{\wp}(\wp'(t))=Q_{\wp}(-1)=0$ implying (ii-i). 

We have $d/dt(\wp^{[1]})=(p-1)(t-\theta)^{p-2}$ implying (i-iii). 

Since $\wp'=-1$, we have $(\wp')^{[1]}=0$ implying (iii-i). 

We see that $Q_{\wp}(\wp^{[1]})=Q_{\wp}((t-\theta)^{p-1}-1)=[((t-\theta)^{p-1}-1)^{p^p}-((t-\theta)^{p-1}-1)]/\wp$ is divisible by $(t-\theta)^{p-2}$, hence  we have (ii-iii). 

Since $\wp(\theta)=0$, the calculation above of $Q_{\wp}(t)$ shows that $(Q_{\wp}(t))^{[1]}=(Q_{\wp}(t)-1)/(t-\theta)$ is divisible by $(t-\theta)^{p-2}$ and we verify (iii-ii).

\section{Derivative conditions on primes occurring in some natural factorizations}

Let us recall some basic quantities/notation  from Carlitz associated to the arithmetic of $A$. For a non-negative integer $n$, we put $[n]=t^{q^n}-t$.
We put $L_0=D_0=1$ and for a positive integer $n$, we put $L_n=[n]L_{n-1}, D_n=[n]D_{n-1}^q$. 

Recall (see e.g., \cite{Thakur}[Sec. 2.5]) the nice factorizations of these fundamental quantities: The quantity $[d]$ is the product of all (monic) primes of degree dividing $d$, $D_d$ is the product of all monic polynomials of degree $d$, and $L_d$ is the (monic) least common multiple of all polynomials of degree $d$. So $L_d=\prod \wp^{\lfloor d/k\rfloor}$, 
and $D_d =\prod \wp^{n_k}$, where both the products run over (monic) primes $\wp$ of degree $k\leq d$ and $n_k=\sum q^{d-ek}$: the sum over $1\leq e\leq \lfloor d/k\rfloor$.

\subsection{Second derivative condition: Wilson primes}

\begin{theorem} Let $p>2$. The degree $d$ primes dividing (the numerator of ) $1/[1]+1/[2]+ \cdots + 1/[d-1]\in  \F_q(t)$ (or equivalently, dividing the polynomial 
$-L_{d-1}' = L_{d-1}/[1]+\cdots +L_{d-1}/[d-1]$) are exactly the Wilson primes, i.e., the primes of degree $d$ with the vanishing second derivative.
These exist only if $p$ divides $d$ or $d-1$. They occur with multiplicity (at least) $p-2$. 
\end{theorem}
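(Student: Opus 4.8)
The plan is to prove the three assertions separately — the equivalence with Wilson primes, the divisibility restriction on $d$, and the multiplicity bound — working throughout over $\F_{q^d}=\F_\wp$, where $\wp=\prod_{i=0}^{d-1}(t-\theta^{q^i})$ splits completely. First I would dispose of the ``equivalently'' clause and set up the reduction. Since $q^i$ is a power of $p$, each $[i]'=q^it^{q^i-1}-1=-1$, so the product rule applied to $L_{d-1}=\prod_{i=1}^{d-1}[i]$ gives $-L_{d-1}'=\sum_{i=1}^{d-1}L_{d-1}/[i]$, which is exactly the numerator of $S:=\sum_{i=1}^{d-1}1/[i]$ over the common denominator $L_{d-1}$. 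A degree $d$ prime divides $[i]=t^{q^i}-t$ only when $d\mid i$, hence is coprime to $L_{d-1}$; so the degree $d$ prime factors of the numerator of $S$ coincide with those of $-L_{d-1}'$, and $v_\wp(-L_{d-1}')=v_\wp(S)=\mathrm{ord}_{t=\theta}S$ (the order of vanishing at $\theta$, multiplicity being preserved by the unramified extension $\F_q\subset\F_{q^d}$). It therefore suffices to compute $\mathrm{ord}_{t=\theta}S$.

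The second step is a local expansion at $\theta$. Factor $\wp=(t-\theta)g$ with $g=\wp^{[1]}=\prod_{i=1}^{d-1}(t-\theta^{q^i})$, so $g(\theta)\neq0$ because the conjugates of $\theta$ are distinct. Setting $u=t-\theta$ and $\alpha_i=\theta^{q^i}-\theta\neq0$, I would write $t^{q^i}-t=\alpha_i-u+u^{q^i}$; since $q^i\ge q\ge p>p-2$, the Frobenius tail $u^{q^i}$ is negligible modulo $u^{p-2}$, giving $1/[i]\equiv\sum_{n\ge0}u^n/\alpha_i^{\,n+1}\pmod{u^{p-2}}$ and hence $S\equiv -g'/g\pmod{u^{p-2}}$, matching the exact expansion $g'/g=\sum_i 1/(t-\theta^{q^i})=-\sum_{n\ge0}\bigl(\sum_i\alpha_i^{-(n+1)}\bigr)u^n$. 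As $g(\theta)\neq0$, the orders of $S$ and of $g'$ at $\theta$ agree below $p-2$. The equivalence with Wilson primes is then immediate: $\wp\mid L_{d-1}'$ iff $\mathrm{ord}_{t=\theta}S\ge1$ iff $g'(\theta)=0$; differentiating $\wp=(t-\theta)g$ twice gives $\wp''(\theta)=2g'(\theta)$, and since $p>2$ this forces $\wp''(\theta)=0$, i.e. $\wp''\equiv0\bmod\wp$, i.e. $\wp''=0$ by degree, which is condition (i) of Theorem 3. The restriction ``$p\mid d$ or $p\mid d-1$'' then drops out of $\wp''=\sum_i i(i-1)p_it^{i-2}$, whose leading term $d(d-1)t^{d-2}$ forces $p\mid d(d-1)$ (this is Remark 5.1).

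For the multiplicity I would exploit that $\wp''=0$ forces $\wp^{(k)}=0$ for every $k\ge2$. Leibniz applied to $\wp=(t-\theta)g$ yields $\wp^{(k)}(\theta)=k\,g^{(k-1)}(\theta)$, so for $2\le k\le p-1$, where $k\not\equiv0\bmod p$, one gets $g^{(k-1)}(\theta)=0$; that is, $g^{(j)}(\theta)=0$ for $1\le j\le p-2$. Since $j!$ is invertible in this range, the Taylor coefficients of $g$ at $\theta$ of orders $1,\dots,p-2$ all vanish, so $\mathrm{ord}_{t=\theta}g'\ge p-2$, whence $v_\wp(L_{d-1}')=\mathrm{ord}_{t=\theta}S\ge p-2$, as claimed.

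The main obstacle is concentrated in two bookkeeping points of the second and third steps: justifying $S\equiv -g'/g\pmod{u^{p-2}}$, which requires checking that every Frobenius tail $u^{q^i}$ and all of its feedback through the geometric series lands in order $\ge p-2$ (this is what pins the bound to $p-2$ rather than something larger); and the characteristic-$p$ accounting that converts $\wp''=0$ into $g^{(j)}(\theta)=0$ only for $j\le p-2$, the invertibility of $k$ and of $j!$ being exactly what breaks down beyond this range. I expect sharpness to be confirmed by the Artin--Schreier family of Section 6, where $g=\wp^{[1]}=(t-\theta)^{p-1}-1$ gives $\mathrm{ord}_{t=\theta}g'=p-2$ precisely.
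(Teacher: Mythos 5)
Your proof is correct, but it takes a genuinely different route from the paper's. The paper argues globally: since a degree $d$ prime $\wp$ always divides $F_d+1$ exactly once unless it is Wilson (and $\wp'\neq 0$), the Wilson primes of degree $d$ are exactly the degree $d$ primes dividing $F_d'$; using $F_d=(-1)^dD_d/L_d$ from \cite{Tw}, the quotient rule and $[n]'=-1$ then give $F_d'=\pm(D_{d-1}^q/L_{d-1})\bigl(1/[1]+\cdots+1/[d-1]\bigr)$, and since no degree $d$ prime divides $D_{d-1}$ or $L_{d-1}$ the first claim follows; the restriction $p\mid d(d-1)$ and the multiplicity $p-2$ are then quoted from \cite[Thm. 2.9]{Tw} (the $\wp''=0$ characterization, and the fact that the Wilson congruence automatically holds mod $\wp^{p-1}$, which upon differentiating gives $\wp^{p-2}\mid F_d'$). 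You instead localize at the Teichm\"uller point: writing $\wp=(t-\theta)g$ with $g=\wp^{[1]}$ and expanding in $u=t-\theta$, you identify $S=\sum 1/[i]$ with the logarithmic derivative $-g'/g$ modulo $u^{p-2}$ (in fact modulo $u^{q}$, since each error term has order $q^i\geq q$), so divisibility and multiplicity of degree $d$ primes in $-L_{d-1}'$ become statements about $\mathrm{ord}_{t=\theta}\,g'$. You enter ``Wilson'' through the second-derivative characterization of Theorem 3 rather than through the definition, so like the paper you lean on \cite{Tw}, but only for that one equivalence: your multiplicity bound is self-contained, re-deriving via Leibniz ($\wp^{(k)}(\theta)=k\,g^{(k-1)}(\theta)$) and the invertibility of $j!$ for $j\leq p-2$ exactly the vanishing $\wp^{[i]}(\theta)=0$ for $1<i<p$ that the paper imports from \cite[Thm. 2.9]{Tw}. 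What each buys: the paper's argument is shorter given the cited machinery, never needs the constant field extension $\F_{q^d}$, and exhibits the clean identity relating $F_d'$ to $S$, which is the real discovery behind the theorem; yours avoids $D_d$ and the formula for $F_d$ entirely, gives an independent proof of the $p-2$ multiplicity, and makes its sharpness transparent, since $\mathrm{ord}_{t=\theta}\,g'=p-2$ exactly for the Artin--Schreier primes of Section 6. Both routes are sound; yours is longer but more self-contained, and your two flagged ``bookkeeping'' worries (the Frobenius tails and the characteristic $p$ Taylor accounting) are indeed handled correctly as written.
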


\begin{proof}
The Wilson primes of degree $d$ are, by definition, those $\wp$ which occur with multiplicity at least 2 in the factorization $F_d+1$, and we have 
\cite[p. 1842]{Tw}$F_d=(-1)^dD_d/L_d$. Hence exactly these $\wp$'s  divide the derivative 
$$\frac{D_d'L_d-D_dL_d'}{L_d^2}=\frac{-D_{d-1}^qL_d-[d]D_{d-1}^qL_d'}{L_d^2}=D_{d-1}^q\frac{L_{d-1}+L_d'}{[d]L_{d-1}^2}.$$
Now, by the product rule of derivatives, we have 
\begin{align*}
L_d'+L_{d-1} & =  -[d]([d-2]\cdots [1]+[d-1][d-3]\cdots [1]+\cdots)\\
 & =  -[d]L_{d-1}(\frac{1}{[d-1]}+\frac{1}{[d-2]}+\cdots +\frac{1}{[1]}),
 \end{align*}
so that the derivative above is $D_{d-1}^q/L_{d-1}$ times the first expression in the theorem. The first claim follows,  since by the above factorization 
results, $D_{d-1}$ or $L_{d-1}$ factorization does not involve any prime of degree $d$. The second claim follows from the characterization \cite[Thm. 2.9]{Tw} of Wilson primes, which implies that $d$ or $d-1$ has to be divisible by $p$. The final claim follows from the result \cite[Thm. 2.9]{Tw} that for Wilson primes $\wp$, the Wilson supercongruence hold mod $\wp^{p-1}$. 
\end{proof}

In \cite[Thm.7.1]{binom}, we showed that in addition to Wilson congruence, there is also `naive Wilson congruence' $[1]\cdots [p-1]\equiv -1 \mod \wp^{p-1}$, 
where $\wp$ is Artin-Schreier prime $\wp =t^p-t-c$ of $\F_q[t]$, where $q=p$. We had conjectured \cite[Pa. 281]{binom} with  some evidence,  that the existence of 
non-trivial gcd between $L_{d-1}+1=[1]\cdots [d-1]+1$ and $[d]$ implies $p$ divides $d$. This was proved (communication with the author, 15 October 2015) by Alexander Borisov.
The statement and the proof immediately generalizes to 

\begin{theorem} Let $d>1$. If $L_{d-1}+c$ and $[d]$ have a non-trivial gcd, where $c\in \F_q^*$, then $p$ divides $d$. 

\end{theorem}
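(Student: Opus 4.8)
The plan is to reduce the problem to the case $\deg\wp=d$, and then to show that the hypothesis forces the formal derivative $\wp'$ to be a \emph{nonzero constant}, whose leading behaviour detects $p\mid d$.

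First I would fix a monic prime $\wp$ dividing both $L_{d-1}+c$ and $[d]$, and take $\theta$ a root of $\wp$ (as in Section 2), so $\F_\wp=\F_q(\theta)$ has degree $k=\deg\wp$ over $\F_q$. Since $[d]=t^{q^d}-t$ is the product of all monic primes of degree dividing $d$, we get $k\mid d$. I claim $k=d$: if $k<d$ then $k\le d-1$, so the factor $[k]$ occurs in $L_{d-1}=\prod_{i=1}^{d-1}[i]$, and $\theta\in\F_{q^k}$ gives $[k](\theta)=\theta^{q^k}-\theta=0$, whence $L_{d-1}(\theta)=0$. This contradicts $L_{d-1}(\theta)=-c\neq 0$; note this is the single place where the hypothesis $c\in\F_q^*$ enters. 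Hence $\deg\wp=d$.

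Next, with $\deg\wp=d$ the conjugates $\theta,\theta^q,\dots,\theta^{q^{d-1}}$ are exactly the $d$ distinct roots of $\wp$, so $\wp'(\theta)=\prod_{i=1}^{d-1}(\theta-\theta^{q^i})$; comparing with $L_{d-1}(\theta)=\prod_{i=1}^{d-1}(\theta^{q^i}-\theta)$ yields the sign relation $L_{d-1}(\theta)=(-1)^{d-1}\wp'(\theta)$. Since $\wp\mid L_{d-1}+c$ reads $L_{d-1}(\theta)=-c$, I obtain $\wp'(\theta)=(-1)^{d}c\in\F_q^*$. The decisive step is then to upgrade the statement $\wp'(\theta)\in\F_q$ to the global statement that $\wp'$ is a constant polynomial: because $\wp'$ has coefficients in $\F_q$, the $q$-power Frobenius gives $\wp'(\theta^{q^j})=\wp'(\theta)^{q^j}=\wp'(\theta)$ for every $j$, so $\wp'-\wp'(\theta)\in\F_q[t]$ vanishes at all $d$ roots of $\wp$ and is divisible by $\wp$. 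But $\deg(\wp'-\wp'(\theta))\le d-1<d=\deg\wp$, forcing $\wp'=\wp'(\theta)$, a nonzero constant. Writing $\wp=t^{d}+\cdots$, the coefficient of $t^{d-1}$ in $\wp'$ is the image of $d$ in $\F_q$; as $d\ge 2$ this coefficient must vanish, i.e. $p\mid d$, which is the assertion.

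The main obstacle is precisely this third step: the point is that $\wp'(\theta)\in\F_q$ cannot occur ``by accident,'' since the degree bound $\deg\wp'=d-1<d$ promotes the congruence $\wp'\equiv\mathrm{const}\pmod{\wp}$ to an honest identity $\wp'=\mathrm{const}$, a genuinely rigid condition — compare the Artin--Schreier primes of Section~6, where indeed $\wp'=-1$. The remaining ingredients (the degree reduction and the Vandermonde-type evaluation of $\wp'(\theta)$) are routine; I would only take care to verify the edge behaviour at the smallest admissible $d$ and to confirm that $c\neq 0$ is used exactly once, in excluding proper-divisor degrees.
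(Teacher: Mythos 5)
Your argument is correct, and it takes a genuinely different route from the paper's. The paper's proof (due to Borisov) works entirely with the conjugate points $x_j=w^{q^j}$: it Lagrange-interpolates the constant polynomial $c$ at these $d$ points, observes via Frobenius that every denominator $\prod_{j\neq i}(x_i-x_j)$ equals the same value $(-1)^dc$, and extracts $p\mid d$ by comparing coefficients of $x^{d-1}$, which forces $\sum_{i=0}^{d-1}1=d$ to vanish. Your proof instead recognizes $\prod_{j\neq 0}(x_0-x_j)$ as $\wp'(\theta)$, upgrades $\wp'\equiv(-1)^dc\pmod{\wp}$ to the identity $\wp'=(-1)^dc$ by the degree bound $\deg\wp'<\deg\wp$, and reads $p\mid d$ off the top coefficient of $\wp'$. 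The two mechanisms are secretly cousins (the Lagrange denominators \emph{are} the values $\wp'(x_i)$, and the $x^{d-1}$-coefficient comparison is a disguised form of the same leading-coefficient computation), but your version has two advantages: it makes the reduction to $\deg\wp=d$ explicit, where the paper leaves the distinctness of the $x_j$ implicit in the non-vanishing of the product; and it exposes the structural content --- that such a $\wp$ must have constant derivative, i.e.\ be of the form $a^p+(-1)^{d-1}ct$ --- which is exactly the characterization the paper proves separately in Theorem~7. The paper's interpolation argument, on the other hand, is self-contained and does not need the minimal-polynomial formalism. All your individual steps check out, including the sign bookkeeping $L_{d-1}(\theta)=(-1)^{d-1}\wp'(\theta)=-c$ and the use of $c\neq 0$ only to exclude proper-divisor degrees $k\le d-1$ via $[k](\theta)=0$.
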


\begin{proof} (Borisov) Non-trivial gcd implies  existence of a root $w\in \F_{q^d}$ for $L_{d-1}+c$. Put $x_j=w^{q^j}$. Then the root means 
$\prod_{j=1}^{d-1}(x_0-x_j)=(-1)^{d-1}(-c)=(-1)^dc$, and raising to $q^i$ powers gives $\prod_{j\neq i}(x_i-x_j)=(-1)^dc$. Now Lagrange interpolation at the $d$ points 
of the constant polynomial in $x$ gives $c=\sum_{i=0}^{d-1}c\prod_{j\neq i}(x-x_j)/\prod(x_i-x_j)$. Comparing coefficients of $x^{d-1}$ gives 
$0=\sum c/((-1)^sc)=(-1)^s\sum_{i=0}^{d-1} 1=d(-1)^s$, thus $p$ divides $d$. 
\end{proof}

Our conjecture \cite[2.2.5(i)]{fw} in connection  with `additive Wieferich-Wilson primes'  saying that `if $p>2$ and gcd between $[d]$ and $1-[d-1]+[d-1][d-2]-\cdots 
+(-1)^{d-1} L_{d-1}$ is non-trivial, then $p$ divides $d$' is still open.  

\subsection{First derivative condition: Special Wilson primes} 

Recall the nice regular factorizations of fundamental quantities $[d], L_d, D_d$ given above, where for a given degree, all the primes of that degree occur with the same 
(non-negative) multiplicity.  In contrast, we have: 

\begin{theorem} Let $d>1$.  For $c\in \F_q^*$, the degree $d$ primes dividing $L_{d-1}-c$ (are also  those dividing $D_{d-1}+(-1)^dc$) are exactly the degree $d$ primes $\wp$ with $d\wp/dt=(-1)^{d-1}c$, i.e., 
the monic primes $\wp=a^p+(-1)^{d-1}ct$ for some $a\in A$. 
These thus exist only when $p$ divides $d$. When they exist they occur with multiplicity at least $p-1$ (which seems to be even exact in `small  degree and $q$' data, 
except when $p=2$, $d=3$, when it is $2$.) for the $L$-case,  and with multiplicity  one for the $D$ case.

\end{theorem}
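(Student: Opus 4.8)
The plan is to reduce everything to a computation over the residue field $\F_{q^d}$ at a root of $\wp$. Fix a degree $d$ prime $\wp$ and let $w\in\F_{q^d}$ be a root, with Galois conjugates $x_j=w^{q^j}$ for $0\le j\le d-1$, so that $\wp(t)=\prod_{j=0}^{d-1}(t-x_j)$ and $[i](w)=w^{q^i}-w=x_i-x_0$. First I would record the two evaluations $L_{d-1}(w)=\prod_{i=1}^{d-1}(x_i-x_0)$ and $d\wp/dt|_{t=w}=\prod_{j=1}^{d-1}(x_0-x_j)$, whence $d\wp/dt|_{t=w}=(-1)^{d-1}L_{d-1}(w)$. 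Since $\wp$ is the minimal polynomial of $w$ over $\F_q$ and $d\wp/dt$ has degree $<d$, the relation $\wp\mid L_{d-1}-c$ (equivalently $L_{d-1}(w)=c$) forces $d\wp/dt-(-1)^{d-1}c$, a polynomial of degree $<d$ vanishing at $w$, to vanish identically; conversely $d\wp/dt=(-1)^{d-1}c$ gives $L_{d-1}(w)=c$. This yields the stated equivalence with $d\wp/dt=(-1)^{d-1}c$, and since $\F_q$ is perfect a polynomial with constant derivative $(-1)^{d-1}c$ is exactly one of the form $a^p+(-1)^{d-1}ct$. The divisibility $p\mid d$ is then immediate: for $d>1$ the degree $d$ of the monic $\wp=a^p+(-1)^{d-1}ct$ must come from $a^p$, so $d=p\deg a$.

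For the passage between the $L$- and $D$-conditions I would use the product form $D_{d-1}=\prod_{i=1}^{d-1}[i]^{q^{d-1-i}}$ obtained by unwinding the recursion $D_{d-1}=[d-1]D_{d-2}^q$. Evaluating at $w$ and applying Frobenius index-by-index, $(x_i-x_0)^{q^{d-1-i}}=x_{d-1}-x_{d-1-i}$, so reindexing gives $D_{d-1}(w)=\prod_{k=0}^{d-2}(x_{d-1}-x_k)=d\wp/dt|_{t=x_{d-1}}=(d\wp/dt|_{t=w})^{q^{d-1}}$. Hence when $\wp\mid L_{d-1}-c$ we get $D_{d-1}(w)=((-1)^{d-1}c)^{q^{d-1}}=(-1)^{d-1}c$, using $c\in\F_q$ so that $c^{q^{d-1}}=c$; this says exactly $\wp\mid D_{d-1}+(-1)^dc$, and the computation reverses, giving the claimed coincidence of the two sets of primes.

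For the multiplicities I would use the standard criterion that $\wp^2\mid f$ iff $\wp\mid f$ and $\wp\mid f'$. In the $D$-case, differentiating $D_{d-1}=[d-1]D_{d-2}^q$ and using $p\mid q$ to kill the second term leaves $D_{d-1}'=-D_{d-2}^q$; since $D_{d-2}$ involves only primes of degree $\le d-2<d$, we have $\wp\nmid D_{d-1}'$, so $\wp$ divides $D_{d-1}+(-1)^dc$ with multiplicity exactly one. In the $L$-case, $d\wp/dt=(-1)^{d-1}c$ forces $d^2\wp/dt^2=0$, so $\wp$ is a Wilson prime by the second-derivative characterization, and by \cite[Thm.\ 2.9]{Tw} the Wilson congruence then holds modulo $\wp^{p-1}$. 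Rewriting $F_d=(-1)^dD_d/L_d=(-1)^dD_{d-1}^q/L_{d-1}$ turns $F_d\equiv-1\pmod{\wp^{p-1}}$ into $L_{d-1}\equiv(-1)^{d-1}D_{d-1}^q\pmod{\wp^{p-1}}$. Finally $\wp\mid D_{d-1}-(-1)^{d-1}c$ lifts under Frobenius to $D_{d-1}^q\equiv(-1)^{d-1}c\pmod{\wp^q}$, and as $q>p-1$ this feeds back to give $L_{d-1}\equiv c\pmod{\wp^{p-1}}$, i.e.\ multiplicity at least $p-1$.

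The main obstacle I anticipate is the $L$-case multiplicity bound: the root-level equivalence only controls $\wp$ to first order, and upgrading to $\wp^{p-1}$ genuinely requires importing the deeper supercongruence that a Wilson prime satisfies the Wilson relation modulo $\wp^{p-1}$, together with the Frobenius lift of the first-order $D$-congruence; keeping the signs and the exponent comparison $q>p-1$ straight is where the care is needed, while the rest is bookkeeping over $\F_{q^d}$. The parenthetical exactness assertions, and the $p=2,\,d=3$ exception, are empirical and lie outside what this argument establishes beyond the stated bound $p-1$ (which for $p=2$ is merely the divisibility already proved).
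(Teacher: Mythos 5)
Your proof is correct, but it reaches the central equivalence by a genuinely different road than the paper. The paper never evaluates at a root directly: it chains together three previously established congruences --- $D_{d-1}^q\equiv(-1)^{d-1}L_{d-1}\bmod [d]$, the Wilson-congruence identity $([d]/\wp)D_{d-1}^q=D_d/\wp\equiv F_d\equiv -1\bmod\wp$, and the reciprocity $d\wp/dt\equiv -1/Q_{\wp}(t)\bmod\wp$ from Remarks 4.1 --- to pass from $L_{d-1}\equiv c$ through the Fermat quotient to the derivative condition. You instead work with the Galois conjugates $x_j=w^{q^j}$ and observe $d\wp/dt|_{t=w}=(-1)^{d-1}L_{d-1}(w)$ and $D_{d-1}(w)=\bigl(d\wp/dt|_{t=w}\bigr)^{q^{d-1}}$, then use minimality of $\wp$ and the degree bound on $d\wp/dt$ to upgrade the pointwise identity to a polynomial one; this is more elementary, bypasses the Fermat quotient entirely, and is in the spirit of Borisov's argument for Theorem 6. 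For the $L$-multiplicity you also diverge: the paper invokes Theorem 5 (that $\wp^{p-2}$ divides $L_{d-1}'$ for Wilson primes) together with the valuation-versus-derivative count, whereas you import the supercongruence $F_d\equiv-1\bmod\wp^{p-1}$ and the Frobenius lift $D_{d-1}^q\equiv(-1)^{d-1}c\bmod\wp^{q}$ with the comparison $q>p-1$; both routes ultimately rest on \cite[Thm. 2.9]{Tw} and are legitimate. The $D$-case multiplicity argument is identical to the paper's, and your handling of $p=2$ (where the bound $p-1=1$ is vacuous) and of the empirical parenthetical claims is appropriately flagged.
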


\begin{proof}
First note  the following simple calculation (in fact, equivalent to the Wilson congruence, since $F_d=(-1)^dD_{d-1}^q/L_{d-1}$) modulo $[d]$: 
$$D_{d-1}^q= \prod_{i=0}^{d-2}(t^{q^{d-1}}-t^{q^i})^q= \prod (t^{q^d}-t^{q^{i+1}})\equiv \prod (t-t^{q^{i+1}})= (-1)^{d-1}L_{d-1}.$$

Next note that, for $\wp$ a prime of degree $d$, we have  $([d]/\wp)D_{d-1}^q=D_d/\wp$, which is the product of all monic polynomials of degree $d$ not divisible by $\wp$, so modulo $\wp$ it is just $F_d\equiv -1$ by the Wilson congruence. In fact, $D_d/\wp\equiv F_d \mod \wp^{q-1}$, though we will not need this. This is seen by the mod $\wp^{q-1}$ calculation
$$\frac{D_d}{\wp}=\prod_{a\in A_+, \deg a <d}\prod_{\theta\in \F_q^*} (\wp +\theta a) =\prod (\wp^{q-1}-a^{q-1})\equiv \prod (-a^{q-1})=(-1)^dD_d/L_d=F_d.$$

Combining these two observations with the connection between the Fermat quotient and the derivative observed in Remarks 4.1, we see that  modulo a prime $\wp$ of degree $d$, we have 
$$L_{d-1}\equiv c \leftrightarrow D_{d-1}^q\equiv (-1)^{d-1}c\leftrightarrow Q_{\wp}(t)=[d]/\wp\equiv (-1)^d/c\leftrightarrow d\wp/dt\equiv (-1)^{d-1}c.$$

Since $D_{d-1}+c=[d-1]D_{d-2}^q+c$ has derivative $-D_{d-2}^q$, we see that degree $d$ primes in its factorization can occur with multiplicity at most one. 

   Let $p>2$. Assume that for prime $\wp$ of degree $d$ divides $L_{d-1}-c$. Then by above, $d\wp/dt$ is constant and thus, $\wp$ is a Wilson prime and so by  Theorem 5, we know that $\wp^{p-2}$ divides $L_{d-1}'$. Hence,  $\wp^{p-1}$ divides $L_{d-1}-c$.
   \end{proof}

%\subsection{Conjecture}  Let $q=p$ be a prime. The degree $d$ primes dividing $L_{d-1}^{q-1}-1$ ($D_{d-1}^{q-1}-1$ respectively) are exactly  those primes of 
%degree $d$ which have derivative non-zero constant, i.e., the primes $\wp$ of degree $d$ of form $a^p+ct$ for some $a\in A$ and $c\in \F_q^*$. For $L_{d-1}^{q-1}-1$ 
%case, they occur with multiplicity $p-1$ for $p>2$. (For $p=2$, the multiplicity seems to be 1, except for $d=3$ when it is $2$.) 

\subsection{Remarks} (i) By Theorem 2 above,  these special Wilson primes can be also described as the prime basis $a$  of degree $d$ for which there are no Wieferich primes, and that these exist only if $p$ divides $d$, or $d=1$. 

(ii) It is clear from the above factorizations that the primes of degree less than $d$ do not divide these quantities.  As mentioned in the examples below, many very large degree (than d) primes can occur, and we do not know their characterization. Since the quantities $[i], L_i, D_i$ are invariant for translations $t\rightarrow t+c$, $c\in \F_q$,  the prime factorization has orbits under these.  This explains multiplicities or number of some large given degree primes which occur in the factorization. 

(iii) While the number of primes of degree $d$ is of the order $q^d/d$, this exponent $d$ becomes $2d/p$ and $d/p$ respectively (under naive randomness assumptions) for Wilson and special Wilson primes, when $p>2$. 

%(iii) Since $D_{d-1}+c=[d-1]D_{d-2}^q+c$ has derivative $-D_{d-2}^q$, we see that degree $d$ primes in its factorization can occur with multiplicity at most one. 

\subsection{Examples}  (0) If $d=1$, $L_{d-1}^{q-1}-1=0$, and all the degree 1 primes are of the required form having constant derivatives.  
In the next case,  $d=p$, it follows from \cite[Thm. 7.1]{binom} and we know even that all these are Artin-Schreier primes,  thus have derivatives $-1$ and already divide $L_{p-1}+1$. 
In higher degrees, we thus get generalizations of these primes and they can occur for any $c$ in general, but for some (low) degrees there are none for some or for all $c$'s. 
(e.g., for $q=2, d=8$ or $q=3, d=9, c=-1$ or $q=4, d=4, c=1$ there are none) 

(1) If $q=3, d=6$, there are total $116$ primes of degree $d$, out of which $6$ have constant derivative, and $15$ have vanishing second derivative. 
 Then the degree $363$ quantity $L_5+1$ ($L_5-1$ respectively) is the  product of the three degree $6$ primes with derivative $1$ (derivative $-1$ respectively) each with multiplicity $2$, three degree $14$ primes and three degree $95$  primes (all with multiplicity 1). The polynomial in the Theorem 5 has degree $360$ and is a product of the $15$ degree 
 $6$ primes with vanishing second derivative, three primes each of degrees $28, 24, 20$, two of degree  $18$, three of degree 2,  and the three primes of degree $1$ each with multiplicity $4$. If $q=3$, $d=9$, there are six primes of degree $9$ dividing $L_8^2-1$ they all divide $L_8-1$.  If $q=3, d=12$, there are no primes of degree $12$ with constant derivative. 

(2) If $q=2$, $d=14$, there are total $1161$ primes of degree $14$, out of which $12$ have constant derivative. 
The factorization of the degree $8192$ polynomial $L_{13}+1$ is the product of exactly the 12 conjectured primes  above, one prime each of degree $22$, $128$, and $9260$, and 
two primes each of degree $1156, 2246$. (Here I assume that the SAGE factor command indeed factored into primes.)

\subsection{Questions} Here are some of the natural questions that arise: 

(1) We proved  that for Wilson primes, the Wilson congruence holds modulo $\wp^{p-1}$. Does it ever 
(or infinitely often) hold modulo even higher power, if $d>1, p>2$? Often we have only proved lower bounds for the multiplicities, 
what are the exact multiplicities? 

(2) Are there nice generalizations of these phenomena for other function field situations, say even in class number one? 

(3) What are the distributions 
in the congruence classes when we do not have supercongruence? (i.e.,  when do not have the zero class modulo $\wp^2$.) 

(4) Interestingly,  the three derivatives appear in parallel fashion in the theorems, though the Fermat quotient is more like (negative) reciprocal of derivative of $\wp$.

{\bf Acknowledgments}: We thank the National University of Singapore and the Tata Institute of Fundamental Research. Preparing for the talks I gave  there,  in June, August 2022 respectively, on \cite{fw} results,  led to rethinking of these topics after several years and led to these new results (Theorems 4, 5,   and a conjecture which later became Theorem  7 respectively).  We used SAGE online cell server for numerical verifications.

\end{document}